\newtheorem{theorem}{Theorem}
\newtheorem{lemma}{Lemma}
\newcommand{\C}{\mathbb{C}}
\newcommand{\R}{\mathbb{R}}
\newcommand{\xL}{\mathcal{L}}
\newcommand{\xH}{\mathcal{H}}
\newcommand{\xtr}[1]{\operatorname{Tr}\left( #1 \right)}
\newcommand{\xK}{\mathcal{K}}
\newcommand{\xN}{\mathbb{N}}
\newcommand{\xW}{\mathcal{W}}
\newcommand{\rN}{{\boldsymbol{\rho}_{(N)}}}
\newcommand{\norm}[1]{\| #1 \|}
\let\bf\boldsymbol
\newcommand{\Id}{\operatorname{Id}}
\newcommand{\orho}{{\bf \rho}}
\newcommand{\oM}{{\bf M}}
\newcommand{\oa}{{\bf a}}
\newcommand{\ob}{{\bf b}}
\newcommand{\oL}{{\bf L}}
\newcommand{\oH}{{\bf H}}
\newcommand{\oP}{{\bf P}}
\newcommand{\oA}{{\bf A}}
\newcommand{\oN}{{ \bf N}}
\newcommand{\opr}{{\bf r}}
\newcommand{\oLambda}{{\bf \Lambda}}
\newcommand{\osigma}{{\bf \sigma}}
\newcommand{\Ndissip}{N_j}
\title{Convergence Analysis of Galerkin Approximations for the Lindblad Master Equation}
\author[1]{Rémi Robin 	\thanks{remi.robin@minesparis.psl.eu}}
\author[1]{Pierre Rouchon 	\thanks{pierre.rouchon@minesparis.psl.eu}}
\affil[1]{Laboratoire de Physique de l'\'Ecole Normale Supérieure, Mines Paris, Inria, CNRS, ENS-PSL, Sorbonne Université, PSL Research University, Paris, France}
\begin{document}
\maketitle
This paper analyzes the numerical approximation of the Lindblad master equation on infinite-dimensional Hilbert spaces. We employ a classical Galerkin approach for spatial discretization and investigate the convergence of the discretized solution to the exact solution. Using \textit{a priori} estimates, we derive explicit convergence rates and demonstrate the effectiveness of our method through examples motivated by autonomous quantum error correction.
\tableofcontents
\section{Introduction}
\subsection{Motivations and Description of the Contributions}
Open quantum systems weakly coupled to their environment can be modeled by the Lindblad equation, also known as the Gorini--Kossakowski--Sudarshan--Lindblad equation~\cite{lindbladGeneratorsQuantumDynamical1976,goriniCompletelyPositiveDynamical1976,chruscinskiBriefHistoryGKLS2017}. The state of the system is described by a density operator $\orho$ on a complex separable Hilbert space $\xH$. A density operator is a positive semidefinite, self-adjoint, trace-class operator with trace $1$ on $\mathcal{H}$. The evolution of $\orho$ depends on the Hamiltonian $\oH$, which is a self-adjoint operator on $\xH$, and a set of operators $(\oL_j)$, not necessarily self-adjoint, called jump operators. From now on, we restrict ourselves to the case of a finite number of jump operators $(\oL_j)_{1\leq j\leq \Ndissip}$. The Lindblad equation then reads formally as 
\begin{align}
    \label{eq_Lindblad}
    \frac{d}{dt}\orho(t) =\xL(\orho(t)) \coloneq -i[\oH,\orho(t)] + \sum_{j=1}^{\Ndissip} D[\oL_j](\orho(t)),\quad \orho(0)=\orho_0,
\end{align}
with
\begin{align}
    \label{eq_dissipative}
    D[\oL](\orho)=\oL\orho \oL^\dag -\frac{1}{2}( \oL^\dag \oL \orho + \orho \oL^\dag \oL).
\end{align}
We aim to approximate the solution of \cref{eq_Lindblad} in the scenario where the Hilbert space $\mathcal{H}$ is infinite-dimensional and the operators $\oH$ and $(\oL_j)_{1 \leq j \leq \Ndissip}$ may be unbounded. A classical approach to this problem involves a Galerkin-type method: First, consider an increasing sequence of finite-dimensional Hilbert spaces $(\mathcal{H}_N)_{N \in \mathbb{N}}$ such that $\mathcal{H}_N \subset \mathcal{H}$ and $\mathcal{H}_N$ is contained within the domains $D(\oH)$ and $D(\oL_j)$. Let $\oP_N$ denote the orthogonal projector onto the subspace $\mathcal{H}_N$. Then, define the bounded operators
    \begin{align}
        \oH_N = \oP_N \oH \oP_N, \quad \oL_{j,N} = \oP_N \oL_j \oP_N.
        \label{eq_truncated}
    \end{align}
 and construct a new bounded Lindbladian operator
    \begin{align}
        \mathcal{L}_N(\orho) \coloneqq -i[\oH_N, \orho] + \sum_j D[\oL_{j,N}](\orho).
        \label{eq:Truncated_lindbladian}
    \end{align}
Note that $\oL_{j,N}^\dag \oL_{j,N}$ is usually different from $\oP_N \oL_j^\dag \oL_j \oP_N$, implying that the alternative choice of truncation $\oP_N D[\oL_{j}](\oP_N \orho \oP_N) \oP_N$ is not equivalent to our choice, and does not necessarily lead to a Lindbladian operator.
We can now define a numerical approximation $\rN$ of $\rho$ as the solution of the Lindblad equation with support on $\xH_N$
\begin{align}
    \frac{d}{dt}\rN=\xL_N(\rN),\quad \rN(0)=\oP_N \orho_0 \oP_N.
\end{align}
The solution of this equation is always well-defined and the semigroup is completely positive and trace preserving as $\xL_N$ is a bounded Lindbladian operator \cite{lindbladGeneratorsQuantumDynamical1976}.
To the best of the authors' knowledge, although several results exist for the Hamiltonian case ($\Ndissip=0$)--see, for example, recent advances in \cite{tongProvablyAccurateSimulation2022,fischerQuantumParticleWrong2025,pengQuantumSimulationBosonrelated2025}--there is no available result that guarantees the convergence of $\rho_N(t)$ towards $\rho(t)$ when $\Ndissip > 0$. Notably, recent work by the authors in \cite{etienneyPosterioriErrorEstimates2025} provides \textit{a posteriori} estimates on the trace-norm distance $\|\rho_N(t)-\rho(t)\|_1$ for a large class of operators $\oH$ and $(\oL_j)$, but does not include a proof of convergence. Additionally, in \cite{robinUnconditionallyStableTime2025}, the authors utilize \textit{a priori} estimates to analyze the time-discretization of some unbounded Lindblad equations for a class of Completely Positive Trace Preserving (CPTP) schemes. These \textit{a priori} estimates were developed in the 1990s to establish the well-posedness\footnote{More precisely, they establish the conservativity of the minimal semigroup, which ensures the uniqueness of the solution.} of the semigroup in \cite{chebotarevLindbladEquationUnbounded1997,chebotarevPrioriEstimatesQuantum2003,chebotarevSufficientConditionsConservativity1998}, and more recently, specifically for the setting of bosonic modes in \cite{gondolfEnergyPreservingEvolutions2024}.

In this article, we focus primarily on the latter setting, and more precisely on the case where the Hilbert space is given by $\xH = \ell^2(\mathbb{N})$ and the operators $\oH$ and $\oL_j$ are polynomials in the annihilation operator $\oa$ and creation operator $\oa^\dag$; we recall in Appendix \ref{subsec:notations} the definitions of these operators. Generalization to other settings requires the choice of a reference operator $\oLambda$ that plays the role of the number operator $\oN=\oa^\dag \oa$ both for defining regularity and approximation spaces, as well as technical domain assumptions. Some generalizations are briefly discussed in \cref{sec_generalization}.
Going back to the case of a single bosonic mode, we denote by $\xW^{0,1}$ the Banach space of trace-class operators and by $\| \cdot\|_1$ the trace norm. Following \cite{gondolfEnergyPreservingEvolutions2024}, we define the so-called Sobolev bosonic spaces for $k \in \mathbb{R}_+$,
    \begin{align}
        \label{def_Hk}
        \xH^k &\coloneqq D\big((\oa^\dag \oa)^{k/2}\big), \\
        \label{def_Wk}
        \xW^{k,1} &\coloneqq \{\,\orho\in\xW^{0,1} \;|\; (\oa^\dag \oa+\Id)^{k/2}\,\orho\,(\oa^\dag \oa+\Id)^{k/2} \in \xW^{0,1} \,\}.
    \end{align}
Note that we use a slightly different convention for the indexing of $\xW^{k,1}$, which corresponds to $\xW^{k/2,1}$ in \cite{gondolfEnergyPreservingEvolutions2024}.
We equip the space \(\xW^{k,1}\) with the norm
\[
    \|\orho\|_{\xW^{k,1}} \coloneqq \big\| (\oa^\dag \oa+\Id)^{k/2}\,\orho\,(\oa^\dag \oa+\Id)^{k/2} \big\|_1,
\]
which ensures that \(\xW^{k,1}\) is a Banach space. 
Equivalently, \(\orho\in\xW^{k,1}\) if and only if there exists \(\osigma\in\xW^{0,1}\) such that
\[
    \orho = (\oa^\dag \oa+\Id)^{-k/2}\,\osigma\,(\oa^\dag \oa+\Id)^{-k/2},
\]
and in that case we set \(\|\orho\|_{\xW^{k,1}} := \|\osigma\|_1\). The map
\(\osigma \mapsto (\oa^\dag \oa+\Id)^{-k/2}\,\osigma\,(\oa^\dag \oa+\Id)^{-k/2}\) is an isometric isomorphism between \(\xW^{0,1}\) and \(\xW^{k,1}\). Next, let $\oP_N$ denote the spectral projector onto $[0,N]$ of $\oa^\dag \oa + \Id$, and consider the Galerkin approximation spaces as $\xH_N=\oP_N\xH$. Namely, $\xH_N=\text{span}\{ \ket{k} \mid k \leq N-1\}$, commonly known in the physics community as the truncated Fock basis. The main result of this paper is to leverage \textit{a priori} estimates that provide regularity related to the Sobolev spaces $\xW^{k,1}$ to obtain convergence rates of $\rN$ towards $\orho$.

The rest of the paper is organized as follows: In \cref{sec:one_mode}, we study the single bosonic mode case introduced above. First, we recall how to obtain regularity results from \textit{a priori} estimates in \cref{subsec:apriori_one_mode}, then in \cref{subsec_cvg_1mode} we establish our main result, \cref{th_main_1_mode}, which ensures convergence of $\rN$ for the setting described above. Then, two examples are provided in \cref{ex_1mode}. In \cref{sec_generalization}, we discuss how to extend our results to more general settings and provide an example with two bosonic modes in \cref{sub_sec:cat_buffer}. We conclude and discuss future work in \cref{sec:conclusion}. Notations are collected in Appendix \ref{subsec:notations} for the reader's convenience.
\section{Convergence for a Single Bosonic Mode}
\label{sec:one_mode}

\subsection{A Priori Estimates}
\label{subsec:apriori_one_mode}

For the Lindblad master equation, regularity estimates typically assert the following: given a self-adjoint operator $\oLambda \geq 0$, we assume there exists a constant $C > 0$ such that, for sufficiently regular, see \cref{subsec:general_tools}, the estimate
\begin{align}
    \xtr{\xL(\orho )\oLambda}\leq C \xtr{\orho  \oLambda},
\end{align}
holds. Then, under additional domain assumptions, one may obtain
\[
\xtr{\orho _t \oLambda}\leq e^{Ct} \xtr{\orho_0 \oLambda},
\]
where $\orho_t = e^{t\xL}\orho_0$ is the solution\footnote{Uniqueness of the solution is given under a condition known as conservativity of the minimal semigroup which is often obtained precisely with a priori estimates of this form, see e.g. \cite{chebotarevSufficientConditionsConservativity1993,chebotarevSufficientConditionsConservativity1998}. For our case, it is a consequence of \cref{th_apriori_single_mode}.} of the Lindblad equation at time $t$ with initial condition $\orho_0$. 
The result below is of this form with $\oLambda = (\Id + \oa^\dag \oa)^k$.

\begin{theorem}[Theorem 3.1 of \cite{gondolfEnergyPreservingEvolutions2024}]
\label{th_apriori_single_mode}
Let $\oH$ and $(\oL_j)$ be polynomials in the creation and annihilation operators, and assume $\oH$ is self-adjoint. Denote their degrees by $p_H$ and $p_j$, respectively.

Assume further that there exists an increasing sequence $(k_r)_{r\in \xN}\in (\mathbb{R}_+)^\mathbb{N}$ with $k_r \to \infty$ and constants $w_{k_r} \geq 0$ such that, for all positive semidefinite $\orho \in \xW^f$\footnote{$\xW^f$ is the set of finite-rank operators on $\xH$ whose range is included in $\xH^f=\text{span}\{\ket{n}, n \in \mathbb{N}\}$, see Appendix \ref{subsec:notations} for details}, and all $r \in \xN$,
\begin{align}
    \label{eq:apriori_estimate}
    \xtr{\xL(\orho)(\oa^\dag \oa+ \Id)^{k_r}} \leq w_{k_r} \xtr{\orho (\oa^\dag \oa+ \Id)^{k_r}}.
\end{align}
Then, the closure of $(\xL, \xW^f)$ generates a strongly continuous, positivity-preserving semigroup on $\xW^{k,1}$ for all $k \in \mathbb{R}_+$, with the estimate
\begin{align}
    \|e^{t\xL}\|_{\xW^{k,1}\to \xW^{k,1}} \leq e^{w_k t }, \quad \forall t \geq 0,
\end{align}
where $w_k = \frac{k_{r_1}-k}{k_{r_1}-k_{r_0}}w_{k_{r_0}} + \frac{k-k_{r_0}}{k_{r_1}-k_{r_0}}w_{k_{r_1}}$ for $r_0, r_1$ such that $k_{r_0} \leq k < k_{r_1}$. Moreover, for $k = 0$, the semigroup is contractive and trace-preserving.
\end{theorem}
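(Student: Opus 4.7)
The plan is to approximate $\xL$ by bounded truncations, propagate the a priori moment estimates uniformly in the truncation parameter to control the approximating semigroups, pass to the limit, and interpolate to cover intermediate regularities.

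First, I would introduce bounded truncations $\xL_M$ as in \cref{eq:Truncated_lindbladian}, using the spectral projectors $\oP_M$ of $\oa^\dag\oa$ onto $[0,M]$. Since $\oP_M$ commutes with $(\oa^\dag\oa+\Id)^{k_r}$, the first task is to verify that the estimate \cref{eq:apriori_estimate} transfers to $\xL_M$ with the same constants $w_{k_r}$ for $\orho$ supported on $\xH_M$. Each $\xL_M$ is bounded and generates a CPTP semigroup $e^{t\xL_M}$ on $\xW^{0,1}$. Differentiating $t \mapsto \xtr{e^{t\xL_M}(\orho)(\oa^\dag\oa+\Id)^{k_r}}$ and applying Gronwall's inequality then yields the uniform bound
\[
\|e^{t\xL_M}\|_{\xW^{k_r,1} \to \xW^{k_r,1}} \leq e^{w_{k_r} t}.
\]

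Next, I would pass to the limit $M \to \infty$. On the dense set $\xW^f$, $\xL_M \orho \to \xL \orho$ in $\xW^{k_r,1}$, and the uniform bound from the previous step supplies the equicontinuity required for a Trotter-Kato-type argument. This produces a strongly continuous positivity-preserving semigroup on each $\xW^{k_r,1}$, with generator the closure of $(\xL, \xW^f)$. Trace preservation at $k=0$ follows because each $e^{t\xL_M}$ is trace preserving; strong convergence transfers this property, and the a priori estimates preclude mass leakage to infinity (conservativity of the minimal semigroup). For intermediate $k \in (k_{r_0}, k_{r_1})$ I would use complex interpolation: the operator-valued map $z \mapsto (\oa^\dag\oa+\Id)^{z/2} e^{t\xL}(\orho) (\oa^\dag\oa+\Id)^{z/2}$ is analytic in the strip $\{k_{r_0} < 2\Re z < k_{r_1}\}$, and the Hadamard three-lines lemma applied to its trace norm delivers precisely the convex-combination formula for $w_k$.

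The main obstacle I expect is the first step: verifying that the a priori estimate survives truncation with the \emph{same} constants $w_{k_r}$. The truncated operators contain projectors $\oP_M$ sandwiched between creation and annihilation operators, so one must check that these insertions cannot increase the weighted moment; intuitively they only suppress couplings leaving $\xH_M$, but making this rigorous requires careful algebraic manipulation exploiting the band-diagonal structure of polynomials in $\oa$ and $\oa^\dag$ relative to the Fock basis. The interpolation step carries a secondary subtlety, namely justifying the analyticity and boundary-value behaviour of the operator-valued function on an unbounded domain, which typically requires approximating by finite-rank states first before taking a density argument in $\xW^{k,1}$.
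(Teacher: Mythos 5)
The paper does not actually prove this statement: it is imported verbatim as Theorem~3.1 of \cite{gondolfEnergyPreservingEvolutions2024}, so the only fair comparison is with the proof given there. That proof does not proceed by Galerkin truncation of the generator. It constructs the dynamics via the minimal-semigroup machinery of Chebotarev--Fagnola--Davies (perturbation of the semigroup generated by $-i\oH-\tfrac12\sum_j\oL_j^\dag\oL_j$, with the \emph{a priori} estimates used to prove conservativity and, after conjugation by $(\Id+\oa^\dag\oa)^{k/2}$, quasi-contractivity on $\xW^{k_r,1}$), and then obtains the intermediate exponents by complex (Stein--Weiss) interpolation of the scale $\xW^{k,1}$. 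Your interpolation endgame and your convex-combination formula for $w_k$ are therefore in the right spirit and match the reference; the construction of the semigroup is where your route diverges, and where it breaks.

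The gap is exactly the step you flag as the ``main obstacle'', and it is not a technical verification but a claim that is false as stated. Write $\oLambda=\Id+\oa^\dag\oa$ and $\oB_j=\oP_M^\perp\oL_j\oP_M$. For $\orho$ supported on $\xH_M$ a direct computation (using that $\oP_M$ commutes with $\oLambda$, so the Hamiltonian part transfers exactly and the gain part only decreases) gives
\begin{align*}
\xtr{\xL_M(\orho)\,\oLambda^{k_r}}=\xtr{\xL(\orho)\,\oLambda^{k_r}}-\sum_j\xtr{\oB_j\orho\oB_j^\dag\,\oLambda^{k_r}}+\tfrac12\sum_j\xtr{\bigl(\oB_j^\dag\oB_j\,\oLambda^{k_r}+\oLambda^{k_r}\oB_j^\dag\oB_j\bigr)\orho},
\end{align*}
the last term coming from $\oL_{j,M}^\dag\oL_{j,M}=\oP_M\oL_j^\dag\oL_j\oP_M-\oB_j^\dag\oB_j$. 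To keep the constant $w_{k_r}$ you need $\tfrac12\{\oB_j^\dag\oB_j,\oP_M\oLambda^{k_r}\oP_M\}\leq\oB_j^\dag\oLambda^{k_r}\oB_j$. The right-hand side is at least $(M+1)^{k_r}\oB_j^\dag\oB_j$, so you are reduced to showing that an anticommutator $\{\oX,\oY\}$ of two positive noncommuting operators ($\oX=\oB_j^\dag\oB_j$, $\oY=(M+1)^{k_r}-\oP_M\oLambda^{k_r}\oP_M\geq0$) is positive, which is false in general; the band-diagonal structure does not save you, because the commutator defect of $\oB_j^\dag\oB_j$ with $\oLambda^{k_r}$ on the boundary band $\{M-p_j\leq n\leq M-1\}$ has operator norm of order $M^{k_r+p_j-1}$ and is not dominated by $\oB_j^\dag\oB_j$. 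So the uniform-in-$M$ bound $\|e^{t\xL_M}\|_{\xW^{k_r,1}\to\xW^{k_r,1}}\leq e^{w_{k_r}t}$ on which your Gronwall and Trotter--Kato steps rest is unproven, and for $p_j\geq2$ (the cat-qubit examples) the natural estimates of the boundary term even diverge with $M$. A telling sanity check: if your step 1 held, the Galerkin semigroups $e^{t\xL_N}$ would be uniformly bounded on every $\xW^{k,1}$, a statement strictly stronger than anything the paper uses --- indeed the convergence proof of \cref{th_main_1_mode} is built precisely so as to need only trace-norm contractivity of $e^{t\xL_N}$ and to put all the regularity on the exact solution $\orho(s)$, not on the truncated propagator. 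To repair your argument you would have to either modify the truncation (e.g.\ regularize the jump operators as $\oL_j(\Id+\epsilon\oLambda)^{-m}$, which preserves the form inequality $\xL^*(\oLambda^{k_r})\leq w_{k_r}\oLambda^{k_r}$ up to controllable errors) or abandon truncation altogether in favour of the minimal-semigroup construction, which is what the cited reference does.
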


While the previous estimate increases exponentially with time, it is quite common to obtain a stronger version of \cref{eq:apriori_estimate}. Namely, assume that there exist $\mu_{k_r},\eta_{k_r}\geq 0$ such that for any $\orho \in \xW^f$, $\orho \geq 0$, and $\xtr{\orho}=1$,
\begin{align}
    \label{eq:apriori_estimate_}
    \xtr{\xL(\orho)(\oa^\dag \oa+ \Id)^{k_r}} \leq \mu_{k_r} -\eta_{k_r} \xtr{\orho (\oa^\dag \oa+ \Id)^{k_r}},
\end{align}

Then, we can easily obtain (see, e.g., \cite[Prop. 4.1]{gondolfEnergyPreservingEvolutions2024}) that for any $r$ and $\orho_0 \in \xW^{k_r,1}$, the following uniform-in-time estimate holds:
\begin{align}
    \| e^{t\xL}\orho_0 \|_{\xW^{k_r,1}} \leq \max( \|\orho_0\|_{\xW^{k_r,1}}, \mu_{k_r}/\eta_{k_r}), \quad \forall t \geq 0.
\end{align}

\subsection{Convergence of the Galerkin Approximations}
\label{subsec_cvg_1mode}

We now state the main result of this section.

\begin{theorem}
\label{th_main_1_mode}
Assume the hypotheses of \cref{th_apriori_single_mode} hold. Let $d = \max(p_H, 2p_j)$ and fix $k > d$. Then, there exists a constant $C_k \geq 0$ such that for every initial condition $\orho_0 \in \xW^{k,1}$, we have
\begin{align}
    \label{eq_th_main_1_mode}
    \|\orho(t)-\rN(t)\|_1 \leq \frac{C_k t }{N^{(k-d)/2}} \| \orho\|_{L^\infty(0,t; \xW^{k,1})}, \quad \forall t \geq 0.
\end{align}
\end{theorem}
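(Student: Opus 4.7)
The plan is a standard semigroup-perturbation argument: a Duhamel identity plus a ``consistency'' estimate for $\xL - \xL_N$ between the Sobolev bosonic norm and the trace norm, closed by the a priori estimate of \cref{th_apriori_single_mode}. I first split
\[
\orho(t) - \rN(t) = \bigl[e^{t\xL}\orho_0 - e^{t\xL_N}\orho_0\bigr] + e^{t\xL_N}\bigl[\orho_0 - \oP_N\orho_0\oP_N\bigr].
\]
Since $\xL_N$ is a bounded Lindbladian, $e^{t\xL_N}$ is trace-norm contractive; the second bracket is then controlled by $\|\orho_0 - \oP_N\orho_0\oP_N\|_1 \leq 2(N+1)^{-k/2}\|\orho_0\|_{\xW^{k,1}}$, which follows by factoring $\orho_0 = (\Id+\oa^\dag\oa)^{-k/2}\tau(\Id+\oa^\dag\oa)^{-k/2}$ with $\|\tau\|_1 = \|\orho_0\|_{\xW^{k,1}}$ and using $\|(\Id-\oP_N)(\Id+\oa^\dag\oa)^{-k/2}\|\leq(N+1)^{-k/2}$. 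This contribution is even better than the claimed rate $N^{-(k-d)/2}$. For the first bracket, a straightforward differentiation of $s \mapsto e^{(t-s)\xL_N}e^{s\xL}\orho_0$ gives
\[
e^{t\xL}\orho_0 - e^{t\xL_N}\orho_0 = \int_0^t e^{(t-s)\xL_N}(\xL - \xL_N)\,e^{s\xL}\orho_0\,ds,
\]
and another application of trace-norm contractivity of $e^{(t-s)\xL_N}$ reduces the proof to the consistency estimate
\[
\|(\xL - \xL_N)\osigma\|_1 \leq C_k\, N^{-(k-d)/2}\,\|\osigma\|_{\xW^{k,1}}, \qquad \osigma \in \xW^{k,1}.
\]

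For the consistency estimate, decompose $\xL - \xL_N = -i[\oH-\oH_N,\cdot] + \sum_j(D[\oL_j]-D[\oL_{j,N}])$. Writing $\oH - \oH_N = (\Id-\oP_N)\oH + \oP_N\oH(\Id-\oP_N)$ and $\oL_j - \oL_{j,N} = (\Id-\oP_N)\oL_j + \oP_N\oL_j(\Id-\oP_N)$, and expanding the dissipator differences, reduces $(\xL-\xL_N)\osigma$ to a finite sum of terms of the form $\oA\osigma\oB$ in which $\oA$ and $\oB$ are polynomials in $\oa, \oa^\dag$ of combined degree at most $d = \max(d_H, 2 d_j)$ (the worst offender being $\oL_j^\dag\oL_j$ of degree $2d_j$) and at least one factor $(\Id-\oP_N)$ is present. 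Factoring $\osigma = (\Id+\oa^\dag\oa)^{-k/2}\tau(\Id+\oa^\dag\oa)^{-k/2}$ as above, each such term is bounded in trace norm using three elementary facts: the decay $\|(\Id-\oP_M)(\Id+\oa^\dag\oa)^{-s/2}\|\leq (M+1)^{-s/2}$; the photon-shift property that a polynomial $\oX$ of degree $p$ shifts the number basis by at most $p$, so $(\Id-\oP_N)\oX = (\Id-\oP_N)\oX(\Id-\oP_{N-p})$, and since projectors $\Id-\oP_?$ commute with functions of $\oa^\dag\oa$ the projector can always be slid next to a fractional power of $(\Id+\oa^\dag\oa)$; and the bosonic boundedness $\|(\Id+\oa^\dag\oa)^{-q/2}\oX(\Id+\oa^\dag\oa)^{-p/2}\| < \infty$ whenever $\oX$ has degree at most $p+q$. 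Combining these, every term factors into a uniformly bounded operator times $(N+1)^{-(k-d_X)/2}\|\tau\|_1$, where $d_X \leq d$ is the polynomial degree appearing in the term; summing delivers the consistency estimate with $C_k$ depending on the coefficients of $\oH, \oL_j$ and on $k$.

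Finally, the a priori bound of \cref{th_apriori_single_mode} ensures $\sup_{s\in[0,t]}\|e^{s\xL}\orho_0\|_{\xW^{k,1}} = \|\orho\|_{L^\infty(0,t;\xW^{k,1})}$, and integrating the consistency estimate against the Duhamel formula yields $\|e^{t\xL}\orho_0 - e^{t\xL_N}\orho_0\|_1 \leq C_k\, t\, N^{-(k-d)/2}\,\|\orho\|_{L^\infty(0,t;\xW^{k,1})}$; combined with the (smaller-order) initial-datum contribution from the first step, this proves the theorem. The main technical obstacle I anticipate is the combinatorial bookkeeping for the dissipator differences $\oL_j\osigma\oL_j^\dag - \oL_{j,N}\osigma\oL_{j,N}^\dag$ and $\oL_j^\dag\oL_j - \oL_{j,N}^\dag\oL_{j,N}$: each expands into several cross terms, and in every cross term the weight $(\Id+\oa^\dag\oa)^{-k/2}$ must be split carefully between the ``small'' $(\Id-\oP_M)$ factor and the polynomial powers of $\oa, \oa^\dag$ coming from $\oL_j, \oL_j^\dag$, absorbing up to $2d_j$ such powers. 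This careful pairing is precisely what forces $d = \max(d_H, 2d_j)$ rather than a smaller constant.
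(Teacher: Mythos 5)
Your proposal is correct and follows essentially the same route as the paper: a Duhamel/variation-of-constants identity combined with trace-norm contractivity of $e^{t\xL_N}$, a separate $O(N^{-k/2})$ bound for the projected initial datum, and a consistency estimate for $\xL-\xL_N$ obtained by inserting $\oP_N^\perp$ factors and trading Sobolev regularity for the decay $\|\oP_N^\perp\|_{\xH^{s_2}\to\xH^{s_1}}\leq N^{-(s_2-s_1)/2}$, with the worst term $\oL_j^\dag\oL_j$ forcing $d=\max(d_H,2d_j)$ exactly as in the paper's \cref{lem:key_estimate}.
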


The remainder of this section is dedicated to the proof of \cref{th_main_1_mode}.

We begin by introducing several key tools in \cref{lem_polynomial_degree_bounded,lem_bound,lem_Duhamel}. Next, \cref{lem:reg_vs_rate} is used to trade regularity for control over the localization of $\orho$ on low Fock states. Finally, the core estimate needed to complete the proof is provided in \cref{lem:key_estimate}.

We first recall a lemma on polynomials of creation and annihilation operators.

\begin{lemma}[{\cite[Lemma B.1]{gondolfEnergyPreservingEvolutions2024}}]
\label{lem_polynomial_degree_bounded}
Let $P \in \C[X,Y]$ be a polynomial of degree $d$. Then $(P(\oa,\oa^\dag), \xH^f)$ is closable, and there exists $c \geq 0$ such that
\begin{align}
    \|P(\oa,\oa^\dag) \ket{\psi}\| \leq c \|(\Id + \oa^\dag \oa)^{d/2} \ket{\psi}\|, \quad \forall \ket{\psi} \in \xH^f.
\end{align}
\end{lemma}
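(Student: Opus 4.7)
The approach is to reduce $P(\oa,\oa^\dag)$ to a finite sum of normal-ordered monomials and then bound each monomial directly on the Fock basis $\{\ket{k}\}_{k\in\N}$. First, iteratively applying the canonical commutation relation $[\oa,\oa^\dag]=\Id$ rewrites any word $\oa^{\varepsilon_1}\cdots\oa^{\varepsilon_\ell}$ with $\varepsilon_i\in\{1,\dag\}$ and $\ell\leq d$ as a linear combination of normal-ordered monomials $\oa^{\dag m}\oa^n$ with $m+n\leq\ell\leq d$ (each commutation strictly reduces the number of operator factors by two). Hence we may write $P(\oa,\oa^\dag)=\sum_{m+n\leq d}c_{mn}\,\oa^{\dag m}\oa^n$ for finitely many coefficients $c_{mn}\in\C$, and by the triangle inequality it suffices to establish the estimate for each monomial $\oa^{\dag m}\oa^n$ separately.

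On the Fock basis, a direct computation gives, for $k\geq n$,
\begin{align}
\oa^{\dag m}\oa^n\ket{k}=\sqrt{\frac{k!}{(k-n)!}\cdot\frac{(k-n+m)!}{(k-n)!}}\,\ket{k-n+m},
\end{align}
and $\oa^{\dag m}\oa^n\ket{k}=0$ for $k<n$. Since $k\mapsto k-n+m$ is injective on $\{n,n+1,\ldots\}$, the vectors $\{\oa^{\dag m}\oa^n\ket{k}\}_{k\geq n}$ are pairwise orthogonal. Thus, for $\ket{\psi}=\sum_k c_k\ket{k}\in\xH^f$,
\begin{align}
\|\oa^{\dag m}\oa^n\ket{\psi}\|^2=\sum_{k\geq n}|c_k|^2\,\frac{k!}{(k-n)!}\cdot\frac{(k-n+m)!}{(k-n)!}\leq C_{m,n}\sum_{k\in\N}|c_k|^2(k+1)^{m+n},
\end{align}
where the inequality holds because the prefactor is a polynomial of degree $m+n$ in $k$. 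Using $m+n\leq d$ and the spectral formula $\|(\Id+\oa^\dag\oa)^{d/2}\ket{\psi}\|^2=\sum_k|c_k|^2(k+1)^d$, the right-hand side is dominated by $C_{m,n}\|(\Id+\oa^\dag\oa)^{d/2}\ket{\psi}\|^2$. Summing with the triangle inequality over the finitely many monomials yields the claimed bound with $c=\sum_{m+n\leq d}|c_{mn}|\,C_{m,n}^{1/2}$.

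Finally, closability of $(P(\oa,\oa^\dag),\xH^f)$ follows from the density of the adjoint's domain. Define the formal adjoint $P^{\#}(\oa,\oa^\dag)$ obtained by reversing operator order within each monomial, swapping $\oa\leftrightarrow\oa^\dag$, and conjugating coefficients; this is again a polynomial in $\oa,\oa^\dag$ mapping $\xH^f$ into itself and satisfying $\langle P^{\#}(\oa,\oa^\dag)\phi,\psi\rangle=\langle\phi,P(\oa,\oa^\dag)\psi\rangle$ for all $\phi,\psi\in\xH^f$. Hence $P(\oa,\oa^\dag)^*\supseteq P^{\#}(\oa,\oa^\dag)|_{\xH^f}$, whose domain contains the dense subspace $\xH^f$, so $P(\oa,\oa^\dag)|_{\xH^f}$ is closable. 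The only genuinely substantive step is the normal-ordering reduction, which must be organized so as to track that the total degree is preserved; everything else is bookkeeping on the Fock basis.
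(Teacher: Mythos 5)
Your proof is correct. The paper does not prove this lemma itself --- it imports it as Lemma B.1 of the cited reference --- but your argument (normal-ordering reduction to monomials $\oa^{\dag m}\oa^{n}$ with $m+n\leq d$, the exact Fock-basis computation with the orthogonality of the images $\ket{k-n+m}$, the degree count giving the $(k+1)^{d}$ domination, and closability via the densely defined formal adjoint on $\xH^f$) is the standard route and each step checks out.
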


This lemma has two natural consequences. First, the operator $P(\oa,\oa^\dag)$ can be extended to a bounded operator from $\xH^d$ to $\xH$. Second, for any $k \in \xN$, the operator $(\Id + \oa^\dag \oa)^k P(\oa,\oa^\dag)$ is bounded on $\xH^{2k+d}$, meaning that $P(\oa,\oa^\dag)$ is bounded from $\xH^{2k+d}$ to $\xH^{2k}$. Classical interpolation, see e.g. \cite[Chap. 4]{berghInterpolationSpacesIntroduction1976}, then implies that $P(\oa,\oa^\dag)$ is also bounded from $\xH^{k+d}$ to $\xH^k$ for any $k \in \R_+$.

We next present a simple estimate that is crucial for bounding terms in the Lindbladian.

\begin{lemma}
\label{lem_bound}
Let $s \geq 0$ and let $\oM_0$, $\oM_1$ be bounded operators from $\xH^s$ to $\xH$. Then, for any $\orho \in \xW^{s,1}$, the operator $\oM_0 \orho \oM_1^\dag$ is trace-class, and
\begin{align}
    \|\oM_0 \orho \oM_1^\dag\|_1 \leq \|\oM_0\|_{\xH^s \to \xH} \|\orho\|_{\xW^{s,1}} \| \oM_1\|_{\xH^s \to \xH}.
\end{align}
Here, $\oM_1^\dag$ denotes the adjoint of $\oM_1$ viewed as a map $\xH^s \to \xH$, and is thus bounded from $(\xH)^*$ to $(\xH^s)^* \eqcolon \xH^{-s}$.
\end{lemma}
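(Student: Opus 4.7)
The plan is to reduce the estimate to the standard ideal property of the trace class via the representation of $\xW^{s,1}$ noted just before the lemma. Write $T_s = (\Id + \oa^\dag \oa)^{s/2}$, so that any $\orho \in \xW^{s,1}$ admits the representation $\orho = T_s^{-1} \osigma T_s^{-1}$ with $\osigma \in \xW^{0,1}$ and, by definition of the norm on $\xW^{s,1}$, $\|\orho\|_{\xW^{s,1}} = \|\osigma\|_1$. Substituting this into the product of interest yields the factorisation
\begin{align*}
  \oM_0 \orho \oM_1^\dag = (\oM_0 T_s^{-1}) \, \osigma \, (T_s^{-1} \oM_1^\dag),
\end{align*}
and the strategy will be to show that each of the two outer factors extends to a bounded operator on $\xH$ with the expected norm.

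The key observation is that $T_s^{-1}$ is, by construction, an isometric isomorphism from $\xH$ onto $\xH^s$ equipped with the graph norm $\|\psi\|_{\xH^s} = \|T_s \psi\|$. Consequently $\oM_0 T_s^{-1} : \xH \to \xH$ is bounded with norm at most $\|\oM_0\|_{\xH^s\to\xH}$, and similarly $\oM_1 T_s^{-1} : \xH \to \xH$ is bounded with norm at most $\|\oM_1\|_{\xH^s\to\xH}$. To treat the right-hand factor I will pass to the Hilbert-space adjoint of $\oM_1 T_s^{-1}$: it has the same operator norm on $\xH$ and, using the self-adjointness of $T_s$ together with the duality identification $(\xH^s)^* \cong \xH^{-s}$ introduced in the statement, coincides with $T_s^{-1} \oM_1^\dag$ (with $\oM_1^\dag$ interpreted as the Banach adjoint $\xH \to \xH^{-s}$, composed with $T_s^{-1} : \xH^{-s} \to \xH$ on the left).

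Once both $A \coloneqq \oM_0 T_s^{-1}$ and $C \coloneqq T_s^{-1} \oM_1^\dag$ are established as bounded operators on $\xH$, the conclusion will be immediate from the two-sided ideal property of the trace class, $\|A \osigma C\|_1 \leq \|A\|\,\|\osigma\|_1\,\|C\|$, combined with $\|\osigma\|_1 = \|\orho\|_{\xW^{s,1}}$. I do not anticipate any serious obstacle: the calculation is entirely algebraic once $\orho$ has been factored, and the only step requiring care is the duality bookkeeping that identifies $T_s^{-1} \oM_1^\dag$ with the adjoint of the already-bounded map $\oM_1 T_s^{-1}$, which will follow in one line from the self-adjointness of $T_s$ and the definition of $\oM_1^\dag$ spelled out in the statement.
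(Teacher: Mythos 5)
Your proposal is correct and follows essentially the same route as the paper: factor $\orho = T_s^{-1}\osigma T_s^{-1}$ with $\|\osigma\|_1 = \|\orho\|_{\xW^{s,1}}$, observe that $\oM_0 T_s^{-1}$ and $(\oM_1 T_s^{-1})^\dag = T_s^{-1}\oM_1^\dag$ are bounded on $\xH$ with the stated norms, and apply the two-sided ideal property of the trace class. Your duality bookkeeping for the right-hand factor is in fact spelled out more carefully than in the paper's one-line computation.
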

\begin{proof}
Since $\orho \in \xW^{s,1}$, we can write $\orho = (\Id + \oa^\dag \oa)^{-s/2} \osigma (\Id + \oa^\dag \oa)^{-s/2}$ for some $\osigma \in \xW^{0,1}$. Then,
\begin{align*}
    \|\oM_0 (\Id + \oa^\dag \oa)^{-s/2} \osigma (\Id + \oa^\dag \oa)^{-s/2} \oM_1^\dag\|_1 
    &\leq \|\oM_0 (\Id + \oa^\dag \oa)^{-s/2}\|_\infty \|\osigma\|_1 \|(\Id + \oa^\dag \oa)^{-s/2} \oM_1^\dag\|_\infty \\
    &= \|\oM_0\|_{\xH^s \to \xH} \|\orho\|_{\xW^{s,1}} \| \oM_1\|_{\xH^s \to \xH}.
\end{align*}
\end{proof}

\begin{lemma}[Duhamel formula]
    \label{lem_Duhamel}
    Assume that the hypotheses of \cref{th_main_1_mode} hold and that $\orho_0 \in \xW^{k,1}$, then
    \begin{align}
        \label{eq:lem_Duhamel}
        \|\orho(t)-\orho_N(t)\|_1 \leq   \|\orho_0-\orho_N(0)\|_1 + \int_0^t \| (\xL-\xL_N)(\orho(s))\|_1 ds
    \end{align}
\end{lemma}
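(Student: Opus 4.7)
The plan is a standard variation-of-constants (Duhamel) argument, exploiting the fact that $\xL_N$ is a bounded operator generating a trace-norm contractive semigroup. First, I note that since $\oH_N$ and $\oL_{j,N}$ are bounded and $\xL_N$ is a genuine Lindbladian on $\xH_N$ (hence CPTP after exponentiation), $e^{t\xL_N}$ is a contraction in the trace norm, i.e.\ $\|e^{t\xL_N}\|_{1\to 1} \leq 1$ for all $t \geq 0$. Second, the hypotheses $\orho_0 \in \xW^{k,1}$ with $k > d$, together with \cref{th_apriori_single_mode}, give $\orho(s) \in \xW^{k,1}$ for all $s \geq 0$ with $\|\orho(s)\|_{\xW^{k,1}}$ locally bounded in $s$. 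Combined with \cref{lem_polynomial_degree_bounded,lem_bound}, this ensures that $\xL(\orho(s))$ and $\xL_N(\orho(s))$ are trace-class and that $s \mapsto (\xL-\xL_N)(\orho(s))$ is (Bochner) integrable into $\xW^{0,1}$ on every bounded interval.

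Next I would set $f(s) = e^{(t-s)\xL_N}\orho(s)$ for $s \in [0,t]$. Since $\xL_N$ is bounded and $s \mapsto \orho(s)$ is strongly differentiable in $\xW^{0,1}$ with derivative $\xL(\orho(s))$ (by \cref{th_apriori_single_mode} applied to the regularized initial datum), the product rule yields
\begin{align*}
    f'(s) = -\xL_N e^{(t-s)\xL_N}\orho(s) + e^{(t-s)\xL_N}\xL(\orho(s)) = e^{(t-s)\xL_N}(\xL-\xL_N)(\orho(s)).
\end{align*}
Integrating from $0$ to $t$ and using $f(t)=\orho(t)$, $f(0)=e^{t\xL_N}\orho_0$, I obtain
\begin{align*}
    \orho(t) - e^{t\xL_N}\orho_0 = \int_0^t e^{(t-s)\xL_N}(\xL-\xL_N)(\orho(s))\,ds.
\end{align*}
Writing $\orho_N(t) = e^{t\xL_N}\orho_N(0)$ and rearranging gives
\begin{align*}
    \orho(t) - \orho_N(t) = e^{t\xL_N}\bigl(\orho_0 - \orho_N(0)\bigr) + \int_0^t e^{(t-s)\xL_N}(\xL-\xL_N)(\orho(s))\,ds.
\end{align*}
Taking the trace norm, using the triangle inequality, and invoking the trace-norm contractivity of $e^{\tau\xL_N}$ for all $\tau \geq 0$ yields exactly \cref{eq:lem_Duhamel}.

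The only delicate point is justifying the strong differentiability of $s \mapsto \orho(s)$ with values in $\xW^{0,1}$ and derivative $\xL(\orho(s))$, since $\xL$ is unbounded. This is where the regularity input from \cref{th_apriori_single_mode} is essential: for $\orho_0 \in \xW^{k,1}$ with $k > d$, the trajectory remains in the domain of $\xL$ (viewed as a generator on $\xW^{0,1}$), so the differential equation holds in the classical sense in $\xW^{0,1}$. Everything else reduces to the product rule for a bounded semigroup composed with a differentiable path, which is routine.
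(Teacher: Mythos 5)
Your proposal is correct and follows essentially the same route as the paper: establish that the trajectory stays in $\xW^{k,1}\subset D(\xL)$ via \cref{th_apriori_single_mode} together with \cref{lem_polynomial_degree_bounded,lem_bound}, derive the variation-of-constants formula using the boundedness of $\xL_N$, and conclude by trace-norm contractivity of the CPTP semigroup $e^{t\xL_N}$. The only cosmetic difference is that you differentiate $s\mapsto e^{(t-s)\xL_N}\orho(s)$ directly, whereas the paper writes the inhomogeneous ODE for the difference $\orho-\orho_N$ first; these are the same computation.
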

\begin{proof}
    Let $\opr(t)=\orho(t)-\rN(t)$. By \cref{th_apriori_single_mode}, $(e^{t\xL})_{t\geq 0}$ is a strongly continuous semigroup on $\xW^{k,1}$; thus for any $t\geq 0$, $\orho(t)\in \xW^{k,1}$. Since $\rN(t)$ stays in the finite-dimensional approximation space, it clearly belongs to $\xW^{k,1}$. Using \cref{lem_bound,lem_polynomial_degree_bounded}, we obtain a constant $C>0$ such that for any $\tilde \orho \in \xW^{f}$,
    \begin{align}
        \|\xL(\tilde \orho)\|_1 \leq C \|\tilde \orho\|_{\xW^{\max(p_H,2p_j),1}}.
    \end{align}
    As a consequence, $\xW^{k,1}$ is included in the domain of the generator of the semigroup, that is, using \cref{th_apriori_single_mode}, the closure of $(\xL,\xW^f)$ for the $\xW^{0,1}$ norm.

    On the other hand, $\xL_N$ is a finite-rank operator and is bounded on any $\xH^s$ for $s\in \mathbb{R}$. As a consequence, for any positive time $T \geq 0$, we obtain $\opr\in \mathcal{C}^1([0,T],D(\xL))$ and
    \begin{align}
        \frac{d}{dt}\opr(t)= \xL(\orho(t))-\xL_N(\rN(t))=\xL_N(\opr(t))+(\xL-\xL_N)(\orho(t)).
    \end{align}
    Using that $\xL_N$ is a bounded operator, we easily get the Duhamel formula,
    \begin{align}
        \opr(t)=e^{t\xL_N}\opr(0)+\int_0^t e^{(t-s)\xL_N}(\xL-\xL_N)(\orho(s))ds.
    \end{align}
    Next, since $(e^{t\xL_N})_{t\geq 0}$ is a completely positive trace-preserving (CPTP) map, it contracts the trace norm. Together with the triangle inequality, this concludes the proof.
\end{proof}

The next lemma allows us to trade regularity for control of the tail of $\orho$.
\begin{lemma}
    \label{lem:reg_vs_rate}
    Let $0 \leq s_1 \leq s_2$ and consider $\oP_N$ the spectral projector on $[0,N]$ of $\Id + \oa^\dag \oa$. The following inequality holds
    \begin{align}
        \|\oP_N^\perp\|_{\xH^{s_2}\to \xH^{s_1}}\leq \frac{1}{N^{(s_2-s_1)/2}}.
    \end{align}
\end{lemma}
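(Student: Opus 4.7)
The plan is to prove \cref{lem:reg_vs_rate} by diagonalising everything in the Fock basis, which simultaneously diagonalises $\Id + \oa^\dag \oa$, its functional-calculus powers $(\Id + \oa^\dag \oa)^{s/2}$ used to define the $\xH^s$-norms, and the spectral projector $\oP_N^\perp$. Concretely, since $(\Id + \oa^\dag \oa)\ket{n}=(n+1)\ket{n}$, for $\psi=\sum_n c_n\ket{n}\in \xH^{s_2}$ one has $\|\psi\|_{\xH^{s}}^2=\sum_n (n+1)^{s}|c_n|^2$ for every $s\in\R_+$, and $\oP_N^\perp\psi=\sum_{n+1>N} c_n\ket{n}$.

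I would then compute
\begin{align*}
\|\oP_N^\perp\psi\|_{\xH^{s_1}}^2
&= \sum_{n+1>N} (n+1)^{s_1}|c_n|^2
= \sum_{n+1>N} (n+1)^{s_2}\,(n+1)^{-(s_2-s_1)}|c_n|^2,
\end{align*}
and use the elementary observation that on the range of $\oP_N^\perp$ one has $n+1>N$, hence $(n+1)^{-(s_2-s_1)}\le N^{-(s_2-s_1)}$ because the exponent is non-positive thanks to the hypothesis $s_1\le s_2$. Factoring out this constant gives $\|\oP_N^\perp\psi\|_{\xH^{s_1}}^2 \le N^{-(s_2-s_1)}\|\psi\|_{\xH^{s_2}}^2$, which is exactly the claimed bound after taking square roots.

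There is essentially no obstacle here: the statement is a direct consequence of the spectral theorem applied to a non-negative self-adjoint operator bounded below by $N$ on the range of $\oP_N^\perp$, combined with a pointwise inequality between two real powers of the same positive number. The only mild care needed is to confirm that the same functional calculus defines the $\xH^s$-norm for arbitrary real $s\ge 0$ (not just integers), which is the definition~\eqref{def_Hk} together with the identification of $\xH^s$ with the completion of $\xH^f$ under this norm. Once this is in place, the computation above is a one-line spectral estimate and the lemma follows.
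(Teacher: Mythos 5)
Your proof is correct and rests on the same idea as the paper's: on the range of $\oP_N^\perp$ the operator $\Id+\oa^\dag\oa$ is bounded below by $N$, so the extra factor $(n+1)^{-(s_2-s_1)}$ (equivalently, the paper's operator inequality $\oLambda^{s}\geq N^{s}\oP_N^\perp$) is at most $N^{-(s_2-s_1)}$. The paper phrases this as a Markov-type operator inequality combined with commutation of $\oLambda$ with its spectral projectors, while you write it out coefficient-wise in the Fock basis; the two are the same spectral estimate in different notation.
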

\begin{proof}
    Let us denote by $\oLambda$ the positive self-adjoint operator $\Id + \oa^\dag \oa$ with domain $\xH^{2}$. Using the spectral decomposition of $\oLambda$, we have for any $s \geq 0$,
    \begin{align}
    \oLambda ^s&=\oLambda^s \oP_N + \oLambda^s \oP_N^\perp\\
    & \geq \oLambda^s \oP_N + N^s \oP_N^\perp
    \end{align}
    A useful consequence is the following Markov-type operator inequality $\oLambda ^s \geq N^s \oP_N^\perp$, which allows us to obtain
    \begin{align}
        \label{eq_markov_sobolev}
        \oLambda^{s_1/2}\oP_N^\perp \oLambda^{-s_2/2}\leq \frac{\oLambda^{(s_2-s_1)/2}}{N^{(s_2-s_1)/2}} \oLambda^{(s_1-s_2)/2}=\frac{\Id}{N^{(s_2-s_1)/2}},
    \end{align}
    where we used that $\oLambda$ commutes with its spectral projector $\oP_N$. Identifying the left-hand side of \cref{eq_markov_sobolev} as $\|\oP_N^\perp\|_{\xH^{s_2}\to \xH^{s_1}}$ concludes the proof.
\end{proof}

To finish the proof of \cref{th_main_1_mode}, we have to bound the two terms of the right-hand side of \cref{eq:lem_Duhamel}. The first one is simple:
\begin{align}
    \| \orho_0-\oP_N \orho_0 \oP_N\|_1 &= \| \oP_N^\perp \orho_0 \oP_N + \orho_0 \oP_N^\perp\|_1\\
    &\leq \| \oP_N^\perp \orho_0\|_1 \|\oP_N\|_\infty + \|\orho_0 \oP_N^\perp\|_1\\
    &=2 \| \oP_N^\perp \orho_0\|_1.
\end{align}
From \cref{lem:reg_vs_rate}, $\| \oP_N^\perp \orho_0\|_1\leq  \frac{1}{N^{k/2}}\|\orho_0\|_{\xW^{k,1}}$, which is smaller than the right-hand side of \cref{eq_th_main_1_mode}. Hence, it remains to bound the second term of the right-hand side of \cref{eq:lem_Duhamel}, which is the object of the next lemma.
\begin{lemma}
    \label{lem:key_estimate}
    There exists $C_k>0$ such that for all $\orho\in \xW^{k,1}$,
    $\| (\xL-\xL_N)(\orho)\|_1\leq \frac{C_k}{N^{(k-d)/2}} \|\orho\|_{\xW^{k,1}}$
\end{lemma}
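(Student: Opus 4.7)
The plan is to expand $(\xL-\xL_N)(\orho)$ into a finite sum of elementary terms of the form $\oM_0 \orho \oM_1^\dag$, each carrying at least one factor of $\oP_N^\perp$, and to control them by \cref{lem_bound} with $s=k$. The polynomial operators $\oH$ and $\oL_j$ are handled via \cref{lem_polynomial_degree_bounded} and interpolation, which produces bounded maps between the appropriate Sobolev spaces $\xH^k \to \xH^{k-d_H}$ and $\xH^k \to \xH^{k-d_j}$; the $\oP_N^\perp$ factors are then moved from these intermediate scales down to $\xH$ via \cref{lem:reg_vs_rate}, which supplies the polynomial decay in $N$.

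Concretely, for the Hamiltonian part I first write $\oH - \oH_N = \oP_N^\perp \oH + \oP_N \oH \oP_N^\perp$ using $\Id = \oP_N + \oP_N^\perp$. Composing \cref{lem_polynomial_degree_bounded} with \cref{lem:reg_vs_rate} yields $\|\oH - \oH_N\|_{\xH^k \to \xH} \leq C N^{-(k-d_H)/2}$, and applying \cref{lem_bound} to each side of the commutator $[\oH - \oH_N,\orho]$ produces a trace-norm bound at rate $N^{-(k-d_H)/2}$. For the jump terms I split $\oL_j \orho \oL_j^\dag - \oL_{j,N} \orho \oL_{j,N}^\dag = \oL_j \orho (\oL_j^\dag - \oL_{j,N}^\dag) + (\oL_j - \oL_{j,N}) \orho \oL_{j,N}^\dag$; using uniform bounds on $\oL_j, \oL_{j,N} : \xH^k \to \xH$ together with the analogous estimate $\|\oL_j - \oL_{j,N}\|_{\xH^k \to \xH} \leq C N^{-(k-d_j)/2}$, each summand is bounded at rate $N^{-(k-d_j)/2}$.

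The main obstacle is the anti-commutator contribution $(\oL_j^\dag \oL_j - \oL_{j,N}^\dag \oL_{j,N})\orho$ (and its right-sided twin). The naive splitting $(\oL_j^\dag - \oL_{j,N}^\dag)\oL_j + \oL_{j,N}^\dag(\oL_j - \oL_{j,N})$ fails, because $\oL_{j,N}^\dag$ has operator norm on $\xH$ that grows like $N^{d_j/2}$ and wipes out the gain. I would instead use the telescoping identity $\oL_j^\dag \oL_j - \oL_{j,N}^\dag \oL_{j,N} = \oP_N^\perp \oL_j^\dag \oL_j + \oP_N \oL_j^\dag \oP_N^\perp \oL_j + \oP_N \oL_j^\dag \oP_N \oL_j \oP_N^\perp$, in which every summand carries exactly one $\oP_N^\perp$, inserted at a location where the polynomial $\oL_j^\dag \oL_j$ still has degree at most $2d_j$ on the far side. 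Tracking the Sobolev indices through each piece gives the uniform bound $\|\oL_j^\dag \oL_j - \oL_{j,N}^\dag \oL_{j,N}\|_{\xH^k \to \xH} \leq C N^{-(k-2d_j)/2}$, and hence a trace-norm estimate at rate $N^{-(k-2d_j)/2}$. Summing all contributions and invoking $d = \max(d_H, 2d_j)$ yields the claimed rate $N^{-(k-d)/2}$.
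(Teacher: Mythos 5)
Your proposal is correct and follows essentially the same route as the paper: the same splitting of the Hamiltonian and jump contributions, the same telescoping identity for $\oL^\dag\oL-\oL_N^\dag\oL_N$ with a single $\oP_N^\perp$ in each summand, and the same combination of \cref{lem_bound}, \cref{lem_polynomial_degree_bounded} and \cref{lem:reg_vs_rate}. One small inaccuracy in a motivational aside: the ``naive'' splitting of the anticommutator does not actually fail --- the $N^{d_j/2}$ growth of $\|\oL_{j,N}^\dag\|_\infty$ only degrades the gain $N^{-(k-d_j)/2}$ to $N^{-(k-2d_j)/2}$, which is still sufficient since $d\geq 2d_j$ --- but this does not affect your argument, which avoids that splitting anyway.
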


\begin{proof}
    We establish the inequality for $\orho\in \xW^f$, a simple density argument then allows us to extend to $\xW^{k,1}$. Let us start with the Hamiltonian part of the Lindbladian:
    \begin{align}
        \|[\oH-\oH_N,\orho]\|_1 &\leq 2 \| (\oH-\oH_N)\orho\|_1\\
        & \leq 2 (\| \oP_N \oH \oP_N^\perp \orho \|_1 + \| \oP_N^\perp \oH \orho \|_1)\\
        & \leq 2 (\| \oH \oP_N^\perp \orho \|_1 + \| \oP_N^\perp \oH \orho \|_1).
    \end{align}
    Then, using \cref{lem:reg_vs_rate}, we get
    \begin{align}
        \| \oH \oP_N^\perp \orho \|_1+ \| \oP_N^\perp \oH \orho \|_1&
        \leq \left( \|\oH\|_{\xH^{p_H}\to \xH} \|\oP_N^\perp\|_{\xH^k \to \xH^{p_H}} + \|\oP_N^\perp\|_{\xH^{k-p_H}\to \xH} \|\oH\|_{\xH^{k}\to \xH^{k-p_H}} \right)\|\orho\|_{\xW^{k,1}}\\
        &\leq \frac{\|\oH\|_{\xH^{p_H}\to \xH}+\|\oH\|_{\xH^{k}\to \xH^{k-p_H}}}{N^{(k-p_H)/2}} \|\orho\|_{\xW^{k,1}}.
    \end{align}

    Let us now address the dissipative part, for a jump operator $\oL$, we have
    \begin{align}
        \label{eq_dissipator_main}
        \|(D[\oL]-D[\oL_N])(\orho)\|_1 \leq \|\oL \orho \oL^\dag - \oL_N\orho \oL_N^\dag\|_1 + 2\|(\oL^\dag \oL-\oL_N^\dag \oL_N)\orho\|_1
    \end{align}
    \paragraph{First term of the right-hand side of \cref{eq_dissipator_main}.} We split again:
    \begin{align}
        \label{eq_second_decompo}
        \|\oL \orho \oL^\dag -\oL_N\orho \oL_N^\dag\|_1\leq \|(\oL-\oL_N)\orho \oL^\dag\|_1 +\|\oL_N\orho (\oL^\dag-\oL_N^\dag)\|_1
    \end{align}
    The first term is handled using $ \oL -\oL_N= \oP_N^\perp \oL + \oP_N \oL \oP_N^\perp$, namely:
    \begin{align}
        \|(\oL-\oL_N)\orho \oL^\dag\|_1 & \leq \|\oP_N^\perp \oL \orho \oL^\dag\|_1+ \|\oP_N \oL \oP_N^\perp \orho \oL^\dag\|_1\\
        & \leq \frac{1}{N^{(k-p_j)/2}}\left( \|\oL \|_{\xH^{p_j}\to \xH}+\|\oL \|_{\xH^{k}\to \xH^{k-p_j}} \right) \|\orho\|_{\xW^{k,1}}\|\oL \|_{\xH^{p_j}\to \xH}.
    \end{align}
    Similarly, for the second term of \cref{eq_second_decompo}, we have
    \begin{align}
        \|\oL_N\orho (\oL^\dag- \oL_N^\dag)\|_1 &\leq \frac{1}{N^{(k-p_j)/2}}\|\oL_N\|_{\xH^{p_j}\to \xH} \|\orho\|_{\xW^{k,1}}\left(\|\oL \|_{\xH^{p_j}\to \xH}+\|\oL \|_{\xH^{k-p_j}\to \xH^{p_j}}\right) \\
        &\leq \frac{1}{N^{(k-p_j)/2}}\|\oL \|_{\xH^{p_j}\to \xH} \|\orho\|_{\xW^{k,1}} \left(\|\oL \|_{\xH^{p_j}\to \xH}+\|\oL \|_{\xH^{k-p_j}\to \xH^{p_j}}\right).
    \end{align}
    \paragraph{Second term of the right-hand side of \cref{eq_dissipator_main}.}
    We decompose the left factor as
    \begin{align}
        \label{eq_remain_second_part}
        (\oL^\dag \oL - \oP_N \oL^\dag \oP_N \oL \oP_N)\orho= (\oP_N^\perp \oL^\dag \oL + \oP_N \oL^\dag \oP_N^\perp \oL+ \oP_N \oL^\dag \oP_N \oL \oP_N^\perp)\orho
    \end{align}
    For the first term of the sum, we get
    \begin{align}
        \|\oP_N^\perp \oL^\dag \oL \orho \|_1&\leq \|\oP_N^\perp\|_{\xH^{k-2p_j}\to \xH} \| \oL^\dag \oL\|_{\xH^{k}\to \xH^{k-2p_j}} \|\orho\|_{\xW^{k,1}}\\
        &\leq \frac{1}{N^{(k-2p_j)/2}}\| \oL^\dag \oL\|_{\xH^{k}\to \xH^{k-2p_j}} \|\orho\|_{\xW^{k,1}},
    \end{align}
    The remaining term of \cref{eq_remain_second_part} are treated the same way.
    Hence, we have shown that
    \begin{align}
        \| (\xL-\xL_N)(\orho)\|_1\leq \frac{C_k}{N^{(k-\max (p_H,2p_j))/2}} \|\orho\|_{\xW^{k,1}}
    \end{align}
\end{proof}

\subsection{Examples}
\label{ex_1mode}
\subsubsection{Example 1: Quantum Ornstein Uhlenbeck}
Consider the quantum Ornstein--Uhlenbeck (qOU) generator:
\begin{align}
    \xL(\orho) = \lambda^2 D[\oa](\orho) + \mu^2 D[\oa^\dag](\orho),
\end{align}
with $\lambda, \mu > 0$. This generator is widely used to model the interaction of a quantum harmonic oscillator with a thermal bath at non-zero temperature. Well-posedness and spectral properties are studied by mathematicians in \cite{ciprianiSpectralAnalysisFeller2000,carboneHypercontractivityQuantumOrnstein2008}, and \textit{a priori} estimates are provided in \cite{gondolfEnergyPreservingEvolutions2024}. We have the following result:
\begin{lemma}[{\cite[Lemma 4.2]{gondolfEnergyPreservingEvolutions2024}}]
    For any $k \in \mathbb{N}$, there exists an explicit constant $\mu_k$ such that for every initial condition $\orho_0 \in \xW^{k,1}$,
    \begin{align}
        \| e^{t\xL} \orho_0 \|_{\xW^{k,1}} \leq \begin{cases}
            \max\left( \| \orho_0 \|_{\xW^{k,1}},  \frac{2 \mu_k}{k(\lambda^2-\mu^2 )} \right) & \text{if $\lambda > \mu$},\\
            e^{t\frac{k}{8}(4 \mu^2+k)}\| \orho_0 \|_{\xW^{k,1}} & \text{if $\lambda \leq \mu$}.\\
        \end{cases}
    \end{align}
\end{lemma}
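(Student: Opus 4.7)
My plan is to reduce the statement to an a priori estimate of the form \cref{eq:apriori_estimate_} when $\lambda > \mu$ and of the form \cref{eq:apriori_estimate} when $\lambda \leq \mu$, then to invoke the semigroup machinery recalled in \cref{subsec:apriori_one_mode}. I would write $\oN_1 \coloneqq \oa^\dag \oa + \Id$ and work throughout with a positive $\orho \in \xW^f$ of unit trace.

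The first step is an explicit computation of $\xtr{\xL(\orho) \oN_1^k}$. The canonical commutation relation $[\oa, \oa^\dag] = \Id$ yields by induction the intertwining identities $\oN_1^k \oa = \oa(\oN_1 - \Id)^k$ and $\oN_1^k \oa^\dag = \oa^\dag(\oN_1 + \Id)^k$; combining these with cyclicity of the trace gives the closed forms
\begin{align*}
    \xtr{D[\oa](\orho)\oN_1^k} &= \xtr{\orho(\oN_1 - \Id)\bigl((\oN_1-\Id)^k - \oN_1^k\bigr)},\\
    \xtr{D[\oa^\dag](\orho)\oN_1^k} &= \xtr{\orho\, \oN_1 \bigl((\oN_1+\Id)^k - \oN_1^k\bigr)}.
\end{align*}
Expanding the binomials, the coefficient of $\xtr{\orho\, \oN_1^k}$ in $\xtr{\xL(\orho)\oN_1^k}$ equals $-k(\lambda^2 - \mu^2)$, with all remaining contributions involving strictly lower powers $\oN_1^j$, $j < k$.

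In the case $\lambda > \mu$, the leading coefficient is strictly negative. To absorb the lower-order terms I plan to apply Young's inequality pointwise on the spectrum of $\oN_1$, writing $n^j \leq \varepsilon n^k + C_{\varepsilon,k,j}$ for any $\varepsilon > 0$ and $j < k$, and choosing $\varepsilon$ small enough (using $\xtr{\orho} = 1$) that the total lower-order contribution is dominated by half the leading term. The resulting estimate has the form $\xtr{\xL(\orho)\oN_1^k} \leq \mu_k - \tfrac{k(\lambda^2-\mu^2)}{2}\xtr{\orho\, \oN_1^k}$, which is \cref{eq:apriori_estimate_} with $\eta_k = k(\lambda^2-\mu^2)/2$; the uniform bound recalled after \cref{th_apriori_single_mode} then yields the claimed inequality with ratio $\mu_k/\eta_k = 2\mu_k/(k(\lambda^2-\mu^2))$.

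In the case $\lambda \leq \mu$, the leading coefficient is nonnegative, so no contraction is available; instead I would upper-bound each lower-order term by $\xtr{\orho\, \oN_1^k}$ using the operator inequality $\oN_1^j \leq \oN_1^k$ for $j \leq k$ (valid since $\oN_1 \geq \Id$) and collapse everything into a single multiple of $\xtr{\orho\, \oN_1^k}$, tracking the binomial coefficients carefully to match the rate $w_k = k(4\mu^2+k)/8$. \Cref{th_apriori_single_mode} then provides the exponential bound, and a standard density argument transfers both results from $\orho \in \xW^f$ to general $\orho_0 \in \xW^{k,1}$. The main obstacle is the combinatorial bookkeeping needed to recover the stated constants exactly: in the contracting case the sharp $\mu_k$ comes from balancing Young absorption against the subleading polynomial coefficients, while the quadratic-in-$k$ term $k^2/8$ in the rate for $\lambda \leq \mu$ traces back to the subleading $\binom{k}{2}$ contribution in $\oN_1((\oN_1+\Id)^k - \oN_1^k)$, which is no longer cancelled by a negative contribution from $D[\oa]$ once $\mu$ dominates $\lambda$.
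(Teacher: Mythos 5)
First, a point of comparison: the paper does not prove this lemma at all --- it is quoted from the cited reference, so there is no internal proof to measure your attempt against. That said, your overall strategy (reduce to an a priori estimate of the form \cref{eq:apriori_estimate_} when $\lambda>\mu$ and \cref{eq:apriori_estimate} when $\lambda\le\mu$, then invoke the semigroup machinery of \cref{subsec:apriori_one_mode}) is exactly the intended route, and the first half of your argument is sound: the intertwining identities, the closed forms for $\xtr{D[\oa](\orho)(\Id+\oa^\dag\oa)^k}$ and $\xtr{D[\oa^\dag](\orho)(\Id+\oa^\dag\oa)^k}$, and the leading coefficient $-k(\lambda^2-\mu^2)$ are all correct. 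The $\lambda>\mu$ case is essentially complete: Young absorption of the degree-$\le k-1$ remainder into half the leading term yields \cref{eq:apriori_estimate_} with $\eta_k=k(\lambda^2-\mu^2)/2$, which reproduces the stated ratio $2\mu_k/(k(\lambda^2-\mu^2))$ up to the unspecified explicit constant $\mu_k$.

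The $\lambda\le\mu$ case, however, is not ``combinatorial bookkeeping'': the method you propose cannot reach the stated rate. Test the pointwise estimate on the vacuum: for $\orho=\ket{0}\bra{0}$ one has $D[\oa](\orho)=0$ and $D[\oa^\dag](\orho)=\ket{1}\bra{1}-\ket{0}\bra{0}$, so $\xtr{\xL(\orho)(\Id+\oa^\dag\oa)^k}=\mu^2(2^k-1)$ while $\xtr{\orho(\Id+\oa^\dag\oa)^k}=1$. Hence any admissible constant $w_k$ in \cref{eq:apriori_estimate} for the weight $(\Id+\oa^\dag\oa)^k$ must satisfy $w_k\ge\mu^2(2^k-1)$, which is exponential in $k$, not $\tfrac{k}{8}(4\mu^2+k)$. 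No rearrangement of binomial coefficients, and no use of $(\Id+\oa^\dag\oa)^j\le(\Id+\oa^\dag\oa)^k$, can evade this, because the obstruction is the exact value of the Lyapunov ratio at a single admissible state (and the negative $D[\oa]$ contribution vanishes there). Indeed, the same test shows that the displayed bound $e^{t\frac{k}{8}(4\mu^2+k)}\|\orho_0\|_{\xW^{k,1}}$ is violated at $t=0^+$ for $\orho_0=\ket{0}\bra{0}$ with, e.g., $k=4$, $\mu=1$, which strongly suggests the constant has not survived the $k\mapsto k/2$ indexing change flagged in the paper's footnote. So you should either prove the result with a constant of the form your method actually delivers (exponential in $k$ suffices for the convergence application in \cref{ex_1mode}), or consult the cited proof for the precise normalization; as written, your plan for this case cannot close.
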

As a consequence, if $\orho_0 \in \xW^{k,1}$ for some $k>2$, \cref{th_main_1_mode} ensures the convergence at a rate $1/N^{(k-2)/2}$ of the Galerkin approximation $\rN$ towards $\orho$ in the trace norm, and the estimate grows linearly with time when $\lambda > \mu$ and exponentially otherwise.
\subsubsection{Example 2: Dissipative Cat-Qubit}
A more complex example for the bosonic code community is the dissipative cat qubit \cite{mirrahimiDynamicallyProtectedCatqubits2014}. After adiabatic elimination of the fast-decaying mode, the Lindblad equation takes the form
\begin{align}
    \xL(\orho) = \kappa_2 D[\oa^2 - \alpha^2](\orho).
\end{align}
Well-posedness and proof of convergence of this equation towards the \textit{code space} spanned by the two coherent states $\ket{\pm \alpha}$ is shown in \cite{azouitWellposednessConvergenceLindblad2016}. \textit{A priori} estimates on $\xW^{k,1}$ that are uniform in time are again provided in \cite[Section 4.2]{gondolfEnergyPreservingEvolutions2024}. Note that adding a quadratic Hamiltonian, a single photon loss term $\kappa_1 D[\oa](\orho)$ and/or a creation term $\kappa_1' D[\oa^\dag](\orho)$ do not change the existence of the uniform-in-time \textit{a priori} estimates. As before, if $\orho_0 \in \xW^{k,1}$ for some $k>4$, \cref{th_main_1_mode} ensures the convergence at a rate $1/N^{(k-4)/2}$ of the Galerkin approximation $\rN$ towards $\orho$ in trace norm with an estimate that grows linearly with time.

\section{Generalizations}
\label{sec_generalization}
\subsection{Main Tools to Tackle the General Case}
\label{subsec:general_tools}
If we summarize the proof of \cref{th_main_1_mode}, the key ingredient is the existence of an interesting self-adjoint operator $\oLambda\geq \Id$. Equipped with this operator, we can define the Sobolev-like spaces as done in \cref{def_Hk,def_Wk} by replacing $\oa^\dag \oa$ with $\oLambda$. More precisely, we need the following:
\begin{itemize}
    \item The Galerkin approximation spaces $\xH_N$ under consideration are defined as the spectral projector of $\oLambda$ on $[0,N]$. For practical purposes, $\oLambda$ should have a compact resolvent so that these spaces are finite-dimensional.
    \item There exist \textit{a priori} estimates formally of the form
    \begin{align}
        \label{eq:apriori_ccl}
        \xtr{\xL(\orho)\oLambda^k} \leq w_k \xtr{\orho \oLambda^k}
    \end{align}
    for sufficiently many density operators $\orho$ and $k$ large enough, or even better for the uniform-in-time case
        \begin{align}
            \label{eq:apriori_ccl_strong}
        \xtr{\xL(\orho)\oLambda^k} \leq \eta_k -\mu_k \xtr{\orho \oLambda^k}.
    \end{align}
    The goal is then to obtain estimates on the semigroup $e^{t\xL}$ for our new Sobolev-like spaces $\xW^{k,1}$. Obviously, \textit{sufficiently many density operators $\orho$} is a vague statement, and the technical details to make this rigorous depend on the setting under consideration. Introducing $\xL^*$ the adjoint of $\xL$ for the trace duality,
    \cref{eq:apriori_ccl} can be rewritten as
    \begin{align}
        \xL^*(\oLambda^k) \leq w_k \oLambda^k.
    \end{align}
    To give a precise meaning to this unbounded quadratic form inequality that allows us to obtain \textit{a priori} estimates, we refer, for example, to \cite{chebotarevPrioriEstimatesQuantum2003} or \cite[Section 3.6]{fagnolaQuantumMarkovSemigroups1999}.
    \item The operators $\oH$, $\oL_j$, and $\oL_j^\dag$ have to be controlled by $\oLambda$, in the sense that there exist $p_H,(p_j)>0$ such that these operators are bounded from $\xH^{k+p_H}$ to $\xH^k$ for all $k \geq 0$.
\end{itemize}

\subsection{Example 3: Dissipative Cat-Qubit with Buffer Cavity}
\label{sub_sec:cat_buffer}
A more complex example is the dissipative cat qubit with buffer cavity. This system corresponds to the physical implementation of a physical cat qubit when the adiabatic elimination of the buffer has not been performed and is an important example of quantum reservoir engineering. We consider two bosonic modes with annihilation operators $\oa$ and $\ob$. The Lindblad equation then reads 
\begin{align}
    \xL(\orho)= -i[(\oa^2-\alpha^2)\ob^\dag + (\oa^{\dag 2}-\alpha^2)\ob,\orho]+ \kappa_b D[\ob](\orho)
\end{align}
In \cite{robinConvergenceBipartiteOpen2024}, the authors prove well-posedness of the time evolution equation as well as the convergence of any trajectory towards the invariant manifold. Using the reference operator $\oLambda= (\frac{\oa^\dag \oa}{2}+ \ob^\dag\ob)$, they provide uniform-in-time \textit{a priori} estimates in $\xW^{k,1}$ for any $k \geq 0$. Note that obtaining a uniform-in-time approximation in this case does not directly follow from a version of \cref{eq:apriori_ccl_strong}. Then, it is easy to check that the operator $\oH= (\oa^2-\alpha^2)\ob^\dag + (\oa^{\dag 2}-\alpha^2)\ob$ (resp., $\oL=\sqrt{\kappa_b}\ob$ and its adjoint) is bounded from $\xH^{k+3}$ (resp., $\xH^{k+1}$) to $\xH^k$ for all $k$.

As a consequence, for every $k > 3$ and for every initial condition $\orho_0 \in \xW^{k,1}$, i.e., $\xtr{\orho_0 (\frac{\oa^\dag \oa}{2}+ \ob^\dag\ob)^k}<\infty$, the Galerkin approximation $\rN(t)$ defined with the truncated Hilbert space $\xH_N=\text{span}(\ket{n_a,n_b}: n_a/2+n_b \leq N)$ converges towards $\orho(t)$ in trace norm at a rate $1/N^{(k-3)/2}$ with an estimate that grows linearly with time.
\section{Conclusion}
\label{sec:conclusion}

Our main contribution is \cref{th_main_1_mode}, which establishes that under suitable regularity assumptions on the initial condition $\orho_0 \in \xW^{k,1}$ with $k > d = \max(p_H, 2p_j)$, the Galerkin approximation $\rN(t)$ converges to the exact solution $\orho(t)$ in trace norm at an algebraic rate $N^{-(k-d)/2}$. This rate depends directly on the regularity of the initial condition. The flexibility of our framework is demonstrated through the generalization to multi-mode systems, where the key ingredients remain the same: a suitable reference operator $\oLambda$, corresponding to \textit{a priori} estimates, and appropriate domain assumptions on the generators.

Several interesting questions emerge from our work that merit further investigation:

\paragraph{Exponential convergence rates.} In the examples we studied, the \textit{a priori} estimates hold for all regularity levels $k$. This raises the question of whether exponential convergence in $N$ might be achievable under additional assumptions.

\paragraph{Regularization.} An interesting question is related to the regularization properties of the Lindblad semigroup. For example, given an initial condition without any prior regularity $\orho_0$ and time $t > 0$, under what conditions can we guarantee that $\orho(t) \in \xW^{k,1}$ for a given $k > 0$? For example, the authors believe that the Lindbladian given by $D[\oa]$ does not improve regularity, while $D[\oa^2]$ does. 

\paragraph{Time discretization.} Our analysis focuses solely on spatial discretization via Galerkin methods. A natural extension would combine our results with the time discretization analysis infinite dimension developed in \cite{robinUnconditionallyStableTime2025} to provide complete error estimates for fully discrete schemes.

\paragraph{Acknowledgments}
The authors thank Daniel Burgarth, Claude Le Bris, Mazyar Mirrahimi, Alain Sarlette, and Lev-Arcady Sellem for valuable discussions and insights that contributed to this work.

This project has received funding from the European Research Council (ERC) under the European Union’s Horizon
2020 research and innovation program (grant agreement No. 884762) and Plan France 2030 through the project ANR-22-PETQ-0006.
\appendix
\section{Notations}
\label{subsec:notations}
We collect here the notations and definitions used throughout the paper. Note also that we set $\hbar=1$ and work with dimensionless quantities.

\begin{itemize}
    \item $\xH$ is a complex separable Hilbert space. Scalar products are denoted using Dirac's bra-ket notation, namely $\ket{x}$ is an element of $\xH$, whereas $\bra{x}$ is the linear form canonically associated to the vector $\ket{x}$.
    \item Operators on $\xH$ are denoted with bold characters such as $\oa, \ob, \orho$, $\oH$, $\oL$.
    \item $B(\xH)$ denotes the (von Neumann) algebra of bounded operators on $\xH$.
    $\Id$ or $\Id_\xH$ denotes the identity operator and $\norm{ \cdot }_\infty$ is the operator norm induced by the Hilbert norm on $\xH$.
    
    \item For an operator $\oA\in B(\xH)$, we say $\oA$ is positive (semidefinite) if it is self-adjoint and for all $u \in \xH$, $\bra{u}\oA \ket{u}> 0$ ($\geq 0$), also written as $\oA>0$ ($\oA \geq 0$). 
    \item If $\xH,\xH'$ are Hilbert spaces, we denote by $B(\xH,\xH')$ the Banach space of linear applications from $\xH$ to $\xH'$ that are continuous for the operator norm. We denote the operator norm of $B(\xH,\xH')$ by $\|\oM\|_{\xH \to \xH'}$, and as mentioned above, $\|\oM\|_{\xH \to \xH}$ is usually denoted $\|\oM\|_\infty$.
    \item The Banach space of trace-class operators on $\xH$ is denoted $\xW^{0,1}$ and the trace norm of $\orho\in \xW^{0,1}$ is $\|\orho\|_1$. 
    \item For any Banach space $\xK$ and $T>0$, the Banach space of essentially bounded functions from $[0,T]$ to $\xK$ is denoted $L^\infty(0, T;\xK)$.
    
    \item The Hilbert space $\ell^2(\xN,\C)$ is often referred to as a single bosonic mode. We denote its canonical basis, often called Fock basis, by $(\ket{n})_{n\in \xN}$. The annihilation operator $\oa$ and creation operator $\oa^\dag$ are unbounded operators that act as follows on this basis:
	\begin{align}
		\oa\ket{n+1}=\sqrt{n+1}\ket{n},\quad \oa^\dag \ket{n}=\sqrt{n+1}\ket{n+1}.
	\end{align}
    \item For a single bosonic mode, $\xH^k$ and $\xW^{k,1}$ are defined in \cref{def_Hk,def_Wk}. $\xH^f\subset \cap_{k \in \xN} \xH^k$ is the (finite) span of $(\ket{k})_{k \in \xN}$. It is a dense subset of all the $\xH^k$ equipped with their norm. Similarly, $\xW^f$ is the set of operators with support on $\xH^f$.
    \item For a single bosonic mode, an operator $\oA$ on $\xH$ is a polynomial in creation and annihilation operators if there exists a non-commutative polynomial $P\in \C[X,Y]$ such that it can be written\footnote{To avoid domain issues, we implicitly require that both operators are defined on the dense subset $\xH^f$.} as $P(\oa,\oa^\dag)$. The degree of the operator is then the smallest degree of such a polynomial $P$. 
    \item Let $\oP\in B(\xH)$ be an orthogonal projector; we denote by $\oP^\perp$ the associated projector onto the orthogonal complement of the range of $\oP$.
\end{itemize}
\vspace{1em}
\printbibliography
\end{document}